\numberwithin{equation}{section}
\theoremstyle{plain} 
\newtheorem{theor}[equation]{Theorem}
\newtheorem{cor}[equation]{Corollary}
\newtheorem{lem}[equation]{Lemma}
\newtheorem{proposition}[equation]{Proposition}
\theoremstyle{definition}
\newtheorem{defin}[equation]{Definition}
\theoremstyle{remark}
\newtheorem{rem}[equation]{Remark}
\newtheorem{ex}[equation]{Example}
\def\hur{\mathit{hur}}
\newcommand{\J}{\mathbb{J}}
\def\build#1_#2^#3{\mathrel{\mathop{\kern0pt#1}\limits_{#2}^{#3}}}
\begin{document}
\title{Batalin-Vilkovisky algebras and the $J$-homomorphism}
\author{Gerald Gaudens* and Luc Menichi**$^\dagger$}
\address{Gerald Gaudens\\Mathematisches Institut\\Beringstrasse 1\\
D-53115 Bonn, GERMANY\\\newline\indent Luc Menichi\\UMR 6093 associ\'ee au CNRS\\
Universit\'e d'Angers, Facult\'e des Sciences\\
2 Boulevard Lavoisier\\49045 Angers, FRANCE}
\email{geraldgaudens at math.uni-bonn.de, luc.menichi at univ-angers.fr}
\begin{abstract}
Let $X$ be a topological space.
The homology of the iterated loop space $H_*\Omega^n X$
is an algebra over the homology of the framed $n$-disks operad
$H_*f\mathcal{D}_n$ ~\cite{Getzler:BVAlg,Salvatore-Wahl:FrameddoBVa}.
We determine completely this $H_*f\mathcal{D}_n$-algebra structure on
 $H_*(\Omega^n X;\mathbb{Q})$.
We show that the action of $H_*(SO(n))$ on the iterated loop
space $H_*\Omega^n X$ is related to the $J$-homomorphism
and that the $BV$-operator vanishes on spherical classes only
in characteristic other than $2$.
\end{abstract}
\thanks{*University of Bonn, Germany}
\thanks{**University of Angers, France}
\thanks{$^\dagger$The second author was partially supported by the Mathematics Research Center of Stanford University}

\maketitle

\section{Introduction}
\label{intro}
Except when specified, the homology functor $H_*$ is considered
over an arbitrary field $\mathbb{K}$.
Let $n\geq 2$.
Let $X$ be a pointed topological space.
Consider the iterated loop space $\Omega^n X$.
The action of the little $n$-disks operad $\mathcal{D}_n$ on
$\Omega^n X$ gives $H_*\Omega^n X$ the structure of
$H_*\mathcal{D}_n$-algebra.
Fred Cohen~\cite{Cohen-Lada-May:homiterloopspaces} has showed that this gives $H_*\Omega^n X$ the structure
of an $e_n$-algebra.

\smallskip\begin{defin}
\label{definition algebre de Gerstenhaber}
A {\it $e_n$-algebra} is a
commutative graded algebra $A$
equipped with a linear map
$\{-,-\}:G \otimes G \to G$ of degree $n-1$
such that:

\noindent a) the bracket $\{-,-\}$ gives $A$ the structure of graded
Lie algebra of degree $n-1$. This means that for each $a$, $b$ and $c\in A$

$\{a,b\}=-(-1)^{(\vert a\vert+n-1)(\vert b\vert+n-1)}\{b,a\}$ and 

$\{a,\{b,c\}\}=\{\{a,b\},c\}+(-1)^{(\vert a\vert+n-1)(\vert b\vert+n-1)}
\{b,\{a,c\}\}.$

\noindent b)  the product and the Lie bracket satisfy the Poisson
relation:
$$\{a,bc\}=\{a,b\}c+(-1)^{(\vert a\vert+n-1)\vert b\vert}b\{a,c\}.$$
\end{defin}

Suppose that $X$ is equipped with a pointed action of $SO(n)$:
pointed action means that for any element $g$ in $SO(n)$, we have $g.*=*$.
Getzler~\cite{Getzler:BVAlg}, Salvatore and Wahl~\cite{Salvatore-Wahl:FrameddoBVa}
have noticed that the little $n$-disks operad $\mathcal{D}_n$
is a $SO(n)$-operad which acts in the category of $SO(n)$-spaces
on $\Omega^n X$, or equivalently that the framed $n$-disks operad
$f\mathcal{D}_n$ acts on $\Omega^n X$.
Therefore $H_*\Omega^n X$ is a $H_*SO(n)$-algebra
over the $H_*SO(n)$-operad $H_*\mathcal{D}_n$ or equivalently
is an algebra over the operad $H_*f\mathcal{D}_n$.
The first goal of this paper is to provide some
explicit computations of this structure
on $H_*\Omega^n X$ for various $X$ and various coefficients $\mathbb{K}$.

It is obvious that the structure of $H_*f\mathcal{D}_n$-algebra
is the structure of an $e_n$-algebra together with the structure of
$H_*SO(n)$-module which satisfy some compatibility relations between them.
Over any coefficients $\mathbb{K}$ when $n=2$, Getzler~\cite{Getzler:BVAlg}
has shown that a $H_*(f\mathcal{D}_2;\mathbb{K})$-algebra is a
$BV_2$-algebra (i.e. Batalin-Vilkovisky algebra).

\smallskip\begin{defin}~\cite[Def 5.2]{Salvatore-Wahl:FrameddoBVa}\label{definition BV_n-algebre}
A {\it $BV_n$-algebra $A$} is an $e_n$-algebra with a linear endomorphism
$BV:A\rightarrow A$ of degree $n-1$ such that $BV\circ BV=0$ and
for each $a,b\in A$,
\begin{equation}\label{crochet defaut de BV derivation}
\{a,b\}=(-1)^{\vert a\vert}\left(BV(ab)-(BVa)b-(-1)^{\vert a\vert}
a(BVb)\right).
\end{equation}
The bracket measures the deviation of the operator $BV$ from being a
derivation
with respect to the product.
Furthermore, in a $BV_n$-algebra, $BV$ satisfies the formula
~\cite[Proposition 1.2]{Getzler:BVAlg}
or ~\cite[(7) in section 5]{Salvatore-Wahl:FrameddoBVa}
\begin{equation}\label{BV derivation pour le crochet}
BV [a,b]=[BV a,b]+(-1)^{\vert a+1\vert}[a,BV b].
\end{equation}

\end{defin}
When $\mathbb{K}=\mathbb{Q}$ and for any $n\geq 2$, Salvatore and Wahl have
computed exactly
what $H_*(f\mathcal{D}_n;\mathbb{Q})$-algebras are:

\smallskip\begin{theor}~\cite[Th. 5.4]{Salvatore-Wahl:FrameddoBVa}
\label{Structure de fdn-algebres rationnellement}
A $H_*(f\mathcal{D}_n;\mathbb{Q})$-algebra $A$ is an $e_n$-algebra
such that the generators of the algebra $H_*(SO(n);\mathbb{Q})$
acts on $A$ as derivation with respect to the product,
except when $n$ is even; in the latter case the generator in degree $n-1$ defines
a structure of $BV_n$-algebra on $A$.
\end{theor}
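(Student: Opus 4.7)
The plan is to exploit the semidirect-product decomposition $f\mathcal{D}_n \simeq \mathcal{D}_n \rtimes SO(n)$, under which $f\mathcal{D}_n(k) \simeq \mathcal{D}_n(k) \times SO(n)^k$ as a $\Sigma_k$-space and operadic composition is governed by composition in $\mathcal{D}_n$, multiplication in $SO(n)$, and the rotation action of $SO(n)$ on $\mathcal{D}_n$. Rationally, the K\"unneth theorem gives
$$H_*(f\mathcal{D}_n(k);\mathbb{Q}) \;\cong\; H_*(\mathcal{D}_n(k);\mathbb{Q})\otimes H_*(SO(n);\mathbb{Q})^{\otimes k},$$
so a $H_*(f\mathcal{D}_n;\mathbb{Q})$-algebra structure on $A$ is equivalent to an $e_n$-algebra structure together with an action of the Hopf algebra $H_*(SO(n);\mathbb{Q})$, subject to compatibilities forced by the mixed operadic composites.

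Next I would recall that $H_*(SO(n);\mathbb{Q})$ is an exterior Hopf algebra whose primitive generators all lie in odd degrees: for $n=2k+1$ in degrees $3,7,\ldots,4k-1$, and for $n=2k$ in degrees $3,7,\ldots,4k-5$ together with one distinguished primitive generator $\Delta$ in degree $n-1$, dual to the Euler-class generator of $H^*(SO(n);\mathbb{Q})$. For any such primitive $g$ and the product $\mu\in H_0(\mathcal{D}_n(2);\mathbb{Q})$, the semidirect-product composition formula combined with the fact that the diagonal $SO(n)\to SO(n)^2$ induces the coproduct on homology yields
$$\gamma(g;\mu)\;=\;(g\cdot\mu)\otimes 1\otimes 1\;+\;\mu\otimes g\otimes 1\;+\;\mu\otimes 1\otimes g$$
in $H_*(\mathcal{D}_n(2);\mathbb{Q})\otimes H_*(SO(n);\mathbb{Q})^{\otimes 2}$. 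All the content of the theorem therefore reduces to computing the term $g\cdot\mu$.

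The geometric input is that $\mathcal{D}_n(2)$ is $SO(n)$-equivariantly homotopy equivalent to $S^{n-1}$ under the standard rotation action (via the unit vector from the first to the second disk), and the orbit map of $\mu$ identifies with the fibration $SO(n)\to SO(n)/SO(n-1)=S^{n-1}$. Since $H_*(S^{n-1};\mathbb{Q})$ is concentrated in degrees $0$ and $n-1$, with $H_{n-1}(S^{n-1};\mathbb{Q})$ generated by the $e_n$-bracket class $\beta$, one obtains $g\cdot\mu=0$ for every primitive generator of degree different from $n-1$, and, for $n$ even, $\Delta\cdot\mu=\beta$ up to a nonzero scalar absorbed into the choice of $\Delta$. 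Evaluating the displayed formula on inputs $a,b\in A$, each generator of degree $\neq n-1$ then acts on $A$ as a derivation of the product, $g\cdot(ab)=(g\cdot a)b+(-1)^{|g||a|}a(g\cdot b)$; for $n$ even the additional summand $\beta(a,b)=\{a,b\}$ produces exactly the defect relation \eqref{crochet defaut de BV derivation} of Definition \ref{definition BV_n-algebre}, and $\Delta^2=0$ follows because $\Delta$ is an odd-degree element of the exterior algebra $H_*(SO(n);\mathbb{Q})$.

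The main obstacle is the careful Koszul-sign bookkeeping in translating the operadic identity $\gamma(\Delta;\mu)=\beta\otimes 1\otimes 1+\mu\otimes\Delta\otimes 1+\mu\otimes 1\otimes\Delta$ into the precise sign convention of \eqref{crochet defaut de BV derivation}, together with verifying that $\Delta\cdot\mu$ really is a \emph{nonzero} multiple of $\beta$ exactly when $n$ is even. The latter reduces to the classical fact that the rational Serre spectral sequence of $SO(n-1)\to SO(n)\to S^{n-1}$ degenerates if and only if $n$ is even, in which case the fundamental class of $S^{n-1}$ survives to produce our generator $\Delta$.
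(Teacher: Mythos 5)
This theorem is quoted in the paper from Salvatore--Wahl \cite[Th.~5.4]{Salvatore-Wahl:FrameddoBVa} without proof, so there is no in-paper argument to compare against; your sketch correctly reconstructs the standard semidirect-product proof from that reference (Künneth decomposition of $H_*(f\mathcal{D}_n(k);\mathbb{Q})$, reduction to the action of primitives on the arity-two space $\mathcal{D}_n(2)\simeq S^{n-1}$ via the orbit map $SO(n)\to S^{n-1}$, and the parity analysis of $H_{n-1}(SO(n);\mathbb{Q})\to H_{n-1}(S^{n-1};\mathbb{Q})$). The only steps left implicit are the Koszul signs you flag yourself and the fact that $H_*(\mathcal{D}_n;\mathbb{Q})$ is generated by the arity-two classes $\mu$ and $\beta$, which is what justifies reducing all compatibilities to $\gamma(g;\mu)$ and $\gamma(g;\beta)$.
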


We recall that $H_*(SO(2k+1);\mathbb{Q})$ is an exterior algebra  in generators $a_{4i-1}$ for $1\leq i \leq k$ while
$H_*(SO(2k+2);\mathbb{Q})$ is isomorphic to $H_*(SO(2k+1);\mathbb{Q})$ with an adjoined exterior generator $a_{2k+1}$ \cite[p. 300]{hatch}.

Our first theorem (Theorem~\ref{structure rationnelle des lacets iteres}) is that for many pointed $SO(n)$-spaces $X$,
this $H_*(f\mathcal{D}_n;\mathbb{Q})$-algebra structure on
$H_*(\Omega^n X;\mathbb{Q})$ reduces to:
\begin{itemize}
\item an $e_n$-algebra structure if $n$ is odd,
\item a $BV_n$-algebra structure if $n$ is even.
\end{itemize}
That is, most of the generators of
$H_*(SO(n);\mathbb{Q})$
act trivially in cases of interest.
Theorem~\ref{structure rationnelle des lacets iteres} holds in particular for any pointed topological space $X$
considered as a trivial pointed $SO(n)$-space.
In the rest of this paper, this is the only case that we will be dealing with.
We explain why in section~\ref{decomposition action diagonale}.

In section \ref{compuitation:ration}, we compute
the $BV_n$-algebra structure on
$H_*(\Omega^n X;\mathbb{Q})$ when $X$ is $n$-connected:
we show that the $BV_n$-algebra $H_*(\Omega^n X;\mathbb{Q})$
is a free  $BV_n$-algebra
(Theorem~\ref{Calcul BV_n algebre rationnellement}).


In section \ref{actionson}, we explain (Theorem~\ref{relation J-homomorphism et action de SO(n)}) how the action of
$H_* SO(n)$ on $H_*\Omega^n X$ is related to the $J$-homomorphism, a useful fact used later in our computations.
In section~\ref{compuitation:ration}
(Theorem~\ref{Calcul BV_n algebre rationnellement})
over $\mathbb{Q}$ and then more generally
in section~\ref{actionson} (Theorem~\ref{relation J-homomorphism et action de SO(n)}) over any field $\mathbb{K}$ of characteristic
different from $2$, we see that the $BV$ operator
vanishes on spherical classes.

In section~\ref{sec:bvspheres} , we show (Theorem~\ref{bvcalc:omegasdeux})
that the $BV$ operator
$$BV:H_1(\Omega^2S^3 , \mathbb{F}_2)\buildrel{\cong}\over\rightarrow 
H_2(\Omega^2S^3 , \mathbb{F}_2)$$
is an isomorphism.
Therefore over $\mathbb{F}_2$, the $BV$ operator is in general non trivial
on spherical classes.

Finally we notice that

-Theorem~\ref{bvcalc:omegasdeux} is a crucial step
in the main result of~\cite{menichi:stringtopspheres} and

-Kallel and Salvatore~\cite[Proposition 7.46]{Kallel:livre}
have given us an independant proof of this Theorem.

\smallskip {\it Acknowledgements.-}
The second author would like to thanks Ralph Cohen for
a discussion simplifying the proof of
Theorem~\ref{relation J-homomorphism et action de SO(n)}.

\section{$H_*(f\mathcal{D}_n;\mathbb{Q})$-algebra structures on
$H_*(\Omega^n X;\mathbb{Q})$}
\label{HQ:algebra:structures}
Denote by $\Omega_k^n S^n$  the path-connected component of $\Omega^n S^n$
given by pointed
maps of degree $k$.
For $n=\infty$, we denote the colimit of the spaces $\Omega^n S^n$ under the suspension maps by $QS^0$, which is the infinite loop space associated to the stable homotopy groups of spheres, that is $\pi_* QS^0$ is the ring  $\pi_*^S$ of stable homotopy. In this case, the degree $i$ component of $QS^0$ is denoted by $Q_iS^0$. In all cases, $[k]$ denotes the class of a degree $k$ map in  $\pi_0 \Omega^n S^n \cong \mathbb{Z}$.
The special orthogonal group $SO(n)$ acts on the sphere $S^{n-1}$
non pointedly.
By considering the sphere $S^n$ as the non reduced suspension of $S^{n-1}$,
we obtain a pointed $SO(n)$-action on $S^n$.
For example, by rotating, the `earth' $S^2$ has an action of the circle $S^1$ preserving the North pole.
By adjunction, we have a morphism of monoids
$\Theta:(SO(n),1)\rightarrow (\Omega_1^n S^n,id_{S^n})$.
Recall that $\Omega_1^n S^n$ is the monoid of self-homotopy
equivalences homotopic to the identity.

\smallskip\begin{theor}\label{structure rationnelle des lacets iteres}
Let $X$ be a pointed $SO(n)$-space.
Suppose that the action of $SO(n)$ on $X$
is obtained by
restriction along the morphism
$\Theta:SO(n)\rightarrow \Omega_1^n S^n$ from some action of $\Omega_1^n S^n$ on $X$.
Then the action
of $H_i(SO(n);\mathbb{Q})$ on $H_*(\Omega^n X;\mathbb{Q})$
coming from the diagonal action of $SO(n)$ on $\Omega^n X$
is trivial except maybe if $n$ is even and $i=n-1$.
\end{theor}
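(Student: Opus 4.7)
\smallskip The plan is to exploit the fact that both the source (rotation of $S^n$) and target parts of the diagonal $SO(n)$-action on $\Omega^n X$ factor through the map $\Theta$, and that the rational homology of the codomain $\Omega_1^n S^n$ is extremely small. Writing $\mu^s(g,f)=f\circ\Theta(g^{-1})$ for the source action and $\mu^t(g,f)(s)=\Theta(g)\cdot_X f(s)$ for the target action (the latter factors through $\Theta$ by the hypothesis on $X$), the diagonal action is the composite
\[
SO(n)\times\Omega^n X\xrightarrow{\Delta\times\mathrm{id}}SO(n)\times SO(n)\times\Omega^n X\xrightarrow{\mathrm{id}\times\mu^s}SO(n)\times\Omega^n X\xrightarrow{\mu^t}\Omega^n X.
\]
On rational homology this forces every occurrence of $H_*(SO(n);\mathbb{Q})$ in the action to pass through the single map $\Theta_*:H_*(SO(n);\mathbb{Q})\to H_*(\Omega_1^n S^n;\mathbb{Q})$.

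\smallskip Next I would compute $H_*(\Omega_1^n S^n;\mathbb{Q})$. For $n$ odd, Serre's theorem gives $\pi_{n+i}(S^n)\otimes\mathbb{Q}=0$ for every $i>0$, so $\Omega_1^n S^n$ is rationally contractible. For $n$ even, the only nonzero positive-degree rational homotopy group is $\pi_{n-1}(\Omega_1^n S^n)\otimes\mathbb{Q}=\mathbb{Q}$, generated by the Whitehead square in $\pi_{2n-1}(S^n)$; hence $\Omega_1^n S^n\simeq_{\mathbb{Q}} K(\mathbb{Q},n-1)$, whose rational homology is exterior on a single class in degree $n-1$. In either case, $\Theta_*$ vanishes on $H_i(SO(n);\mathbb{Q})$ for every $i>0$, except possibly when $n$ is even and $i=n-1$.

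\smallskip It remains to transport this vanishing from $\Theta_*$ to the diagonal action. The Hopf algebra $H_*(SO(n);\mathbb{Q})$ is exterior on primitive generators, located in degrees $4i-1$ for $1\leq i\leq\lfloor(n-1)/2\rfloor$, together with one additional primitive in degree $n-1$ when $n$ is even. For such a primitive $a$, applying K\"unneth and $\psi(a)=a\otimes 1+1\otimes a$ to the factorization above gives
\[
a\cdot_{\mathrm{diag}} x \;=\; \Theta_*(a)\cdot_t x \;+\; \Theta_*(a)\cdot_s x,
\]
which vanishes whenever $\Theta_*(a)=0$, i.e.\ whenever $|a|\neq n-1$. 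The $H_*(SO(n);\mathbb{Q})$-action on $H_*(\Omega^n X;\mathbb{Q})$ is associative, so the elements that act trivially form a two-sided ideal; and since odd-degree exterior generators square to zero, any monomial of positive degree $i\neq n-1$ must contain at least one primitive factor of degree different from $n-1$, hence lies in that ideal.

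\smallskip The main technical difficulty is this final combinatorial bookkeeping: verifying that every class of positive degree $i\neq n-1$ really does decompose as a product involving a ``bad'' primitive. This depends on $H_*(SO(n);\mathbb{Q})$ being primitively generated and on odd-degree exterior generators squaring to zero, so that the only surviving monomials outside degree $n-1$ are scalar multiples of $1$.
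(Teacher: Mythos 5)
Your overall strategy---everything factors through $\Theta$, and $H_{>0}(\Omega^n_1 S^n;\mathbb{Q})$ is concentrated in degree $n-1$---is the right one, and your computation of the rational homotopy type of $\Omega^n_1 S^n$ agrees with the paper's. The gap is in how you transport the vanishing of $\Theta_*$ to the full action. You establish the factorization only for \emph{primitive} classes and then invoke the combinatorial claim that ``any monomial of positive degree $i\neq n-1$ must contain at least one primitive factor of degree different from $n-1$''. That claim is false whenever $n\equiv 0 \pmod 4$: in that case $H_*(SO(n);\mathbb{Q})$ has \emph{two} exterior generators in degree $n-1$ (e.g.\ $H_*(SO(4);\mathbb{Q})=\Lambda(a_3,a_3')$), and the monomial $a_{n-1}a_{n-1}'$ has degree $2n-2\neq n-1$ yet contains no ``bad'' primitive. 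Your ideal argument therefore says nothing about the action of $H_{2n-2}(SO(n);\mathbb{Q})$ in these cases, and your source/target decomposition does not rescue it: as you have set it up, $H_{2n-2}(SO(n);\mathbb{Q})$ factors through the degree-$(2n-2)$ part of $H_*(\Omega^n_1 S^n;\mathbb{Q})^{\otimes 2}$, which contains the nonzero summand $H_{n-1}\otimes H_{n-1}$.

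The paper closes this by proving a stronger factorization up front: the diagonal action is natural up to homotopy with respect to morphisms of H-groups, $\Theta:(SO(n),1)\to(\Omega^n_1 S^n,\mathrm{id})$ is such a morphism (the target being a path-connected monoid, hence an H-group), and by hypothesis the $SO(n)$-action on $X$ extends over $\Theta$. Hence the \emph{entire} action map $SO(n)\times\Omega^n X\to\Omega^n X$ is homotopic to the diagonal action of $\Omega^n_1 S^n$ precomposed with $\Theta\times\mathrm{id}$, so the whole $H_*(SO(n);\mathbb{Q})$-module structure factors through the single ring map $\Theta_*:H_*(SO(n);\mathbb{Q})\to H_*(\Omega^n_1 S^n;\mathbb{Q})\cong\Lambda(x_{n-1})$, whose target vanishes in every positive degree except $n-1$---in particular in degree $2n-2$, which disposes of $a_{n-1}a_{n-1}'$. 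If you prefer to keep your element-by-element set-up, the missing case can be repaired by observing that $(a_{n-1}a_{n-1}')\cdot x=a_{n-1}\cdot(a_{n-1}'\cdot x)$ is the action of $\Theta_*(a_{n-1})\,\Theta_*(a_{n-1}')\in H_{2n-2}(\Omega^n_1 S^n;\mathbb{Q})=0$; but justifying that identity is precisely the global factorization through a single copy of $H_*(\Omega^n_1 S^n;\mathbb{Q})$ that your argument currently lacks.
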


The case where $n$ is even and $i=n-1$  is analyzed in section \ref{compuitation:ration}.
Here are the most important examples of pointed $SO(n)$-spaces we can apply
Theorem~\ref{structure rationnelle des lacets iteres} to.
\begin{ex}\label{suspension exemple}
Let $Y$ be a pointed space. We consider the  $n^{\mathrm{th}}$ reduced suspension of $Y$, which is by definition
$X=S^n\wedge Y$. The space $X$ has a pointed $SO(n)$-action defined by
$
g.(t\wedge y):=(g.t)\wedge y
$
for any $g\in SO(n)$, $t\in S^n$ and $y\in Y$.
\end{ex}
\begin{ex}\label{trivial SO(n)-space}
Any pointed topological space $X$ can be considered as a trivial pointed
$SO(n)$-space.
\end{ex}
We give immediately the proof of Theorem~\ref{structure rationnelle des
lacets iteres}
since it will gives us the opportunity to review in general the diagonal
action of $SO(n)$ on $\Omega^n X$, for a pointed $SO(n)$-space $X$.
\begin{proof}
Let $G$ be a H-group acting pointedly on two spaces $Y$ and $Z$.
The diagonal action on the space of pointed maps, $map_*(Y,Z)$, is the
action
defined by
$$
(g.f)(y):=g.f(g^{-1}.y)
$$
for any $g\in G$, $y\in Y$ and any pointed map $f:(Y,*)\rightarrow (Z,*)$.
The diagonal action is natural up to homotopy with respect to morphisms
of H-groups.
Let $G:=SO(n)$, $Y:=S^n$ and $Z$ be any pointed $SO(n)$-space $X$, we obtain
the action considered by Geztler, Salvatore and
Wahl~\cite[Exemple 2.5]{Salvatore-Wahl:FrameddoBVa}.

The monoid $(\Omega_1^n S^n,id)$ is path-connected. So it is a
H-group~\cite[X.2.2]{Whitehead:eltsoht}
and $\Theta: (SO(n),1) \rightarrow (\Omega_1^n S^n,id)$
is a morphism of H-groups.

Now suppose that the pointed action of $SO(n)$ on $X$ is obtained by
restriction of an action of $\Omega_1^n S^n$.
Then by naturality, the diagonal action of $SO(n)$ on $\Omega^n X$
is homotopic to the composite
$$
SO(n)\times \Omega^n X\buildrel{\Theta\times \Omega^n X}\over\longrightarrow
\Omega^n_1 S^n\times\Omega^n X\buildrel{action\times\Omega^n
X}\over\longrightarrow\Omega^n X.
$$
where action is the diagonal action of $\Omega_1^n S^n$ on $\Omega^n X$.

As we will see in more details below (beginning of section~\ref{actionson}), there is a pointed homotopy
equivalence
between $(\Omega_0^n S^n,*)$ and $(\Omega_1^n S^n,id)$.
But
{\Small
$$\forall i\geq 1,\quad
\pi_i(\Omega_0^n S^n,*)\otimes\mathbb{Q}\cong
\pi_{i+n}(S^n,*)\otimes\mathbb{Q}=
\begin{cases}
\mathbb{Q} &\mbox{if $n$ is even and
$i=n-1$,}\\
0& \mbox{otherwise.}
\end{cases}
$$}
Therefore for $n$ even, $\Omega_1^n S^n$ is rationally homotopy equivalent
to $S^{n-1}$.
And for $n$ odd, $\Omega_1^n S^n$ is rationally contractible.
\end{proof}

\section{Decomposition of the action of $SO(n)$ on $\Omega^n X$ for a general $SO(n)$-space}
\label{decomposition action diagonale}
The diagonal action of $SO(n)$
on $\Omega^n X$ is the combination of two different actions of
$\Omega^n X$:
\begin{enumerate}
\item The action denoted by $S$, `on the source', given by
$
(g.f)(y):=f(g^{-1}.y)
$
for any $g\in SO(n)$, $y\in Y$ and any pointed map $f:(Y,*)\rightarrow (Z,*)$.
\item The action denoted by $T$, `on the target', given by
$
(g.f)(y):=g.f(y)
$
for any $g\in SO(n)$, $y\in Y$ and any pointed map $f:(Y,*)\rightarrow (Z,*)$.
\end{enumerate}
For $n=2$, the result in this section is:

\smallskip\begin{theor}
\label{theo7}
Let $X$ be a pointed $S^1$-space.
Denote by $BV_{diag}$, $BV_S$ and $BV_T$ the $BV$-operators respectively due to the diagonal action, the action `on the
source' and the action `on the target' of $S^1$
on $\Omega^2 X$.
Then $BV_{diag}=BV_S+BV_T$
and $BV_T$ is a derivation with respect to the Pontryagin product.
\end{theor}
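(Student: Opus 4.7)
The approach is to exploit two simple geometric facts: the source action $S$ and the target action $T$ of $S^1$ on $\Omega^2 X$ commute (since one acts on the source variable in $S^2$ and the other on the target values in $X$, which are independent slots), and the target action commutes with loop concatenation. Both assertions in the theorem then follow by chasing these factorizations through homology, using the coproduct on $H_*(S^1;\mathbb{K})$.

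First, for the splitting $BV_{diag}=BV_S+BV_T$, I would observe that the diagonal action factors as
\[
S^1\times \Omega^2 X \xrightarrow{\Delta\times \mathrm{id}} S^1\times S^1 \times \Omega^2 X \xrightarrow{\mathrm{id}\times S} S^1\times \Omega^2 X \xrightarrow{T} \Omega^2 X,
\]
reflecting the pointwise identity $g\cdot_{diag} f = g\cdot_T(g\cdot_S f)$. Applying $H_*$ and using $\Delta_*[S^1]=[S^1]\otimes[1]+[1]\otimes[S^1]$ on the Hopf algebra $H_*(S^1;\mathbb{K})$, the class $[S^1]\otimes a$ is carried through the composite to $BV_T(a)+BV_S(a)$, since $[1]\in H_0(S^1)$ acts as the identity on $H_*\Omega^2 X$. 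This immediately yields $BV_{diag}=BV_S+BV_T$.

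Next, for the derivation property of $BV_T$ with respect to the Pontryagin product $m$, the key input is the pointwise identity $g\cdot_T(f_1\ast f_2)=(g\cdot_T f_1)\ast(g\cdot_T f_2)$, which is immediate since $T$ multiplies by $g$ pointwise on the target while concatenation is defined pointwise on the source. This yields a commutative diagram expressing $T\circ (\mathrm{id}\times m)$ as
\[
m\circ (T\times T)\circ (\mathrm{id}\times \tau\times \mathrm{id})\circ (\Delta\times \mathrm{id}\times \mathrm{id}) : S^1\times\Omega^2 X\times\Omega^2 X \to \Omega^2 X,
\]
where $\tau$ denotes the graded swap. Tracking $[S^1]\otimes a\otimes b$ through homology, the coproduct of $[S^1]$ produces two terms, the swap introduces the Koszul sign $(-1)^{|a|}$ when pushing the degree-one class $[S^1]$ past $a$, and the outcome is exactly $BV_T(ab)=BV_T(a)\cdot b+(-1)^{|a|}a\cdot BV_T(b)$.

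The main obstacle I anticipate is purely bookkeeping: keeping the Koszul signs correct in the swap map of graded modules. The geometric content is transparent and reduces to verifying that $S$ and $T$ commute (which follows from their acting on independent variables) and that $T$ is compatible with loop concatenation; there is no deeper topological obstruction.
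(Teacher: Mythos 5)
Your proposal is correct and follows essentially the same route as the paper: the diagonal action is factored through $\Delta\times\mathrm{id}$ so that primitivity of $[S^1]$ gives $BV_{diag}=BV_S+BV_T$, and the distributivity of the target action over loop concatenation (i.e.\ the Pontryagin product being a map of $H_*S^1$-modules) combined with primitivity of $[S^1]$ gives the derivation property. The only difference is cosmetic: the paper states the derivation property via module-map language while you unwind the swap and Koszul signs explicitly.
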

The diagonal action is in homology the composite (where the roles of $S$ and
$T$
could be permuted)
$$
\xymatrix{
H_*SO(n)\otimes H_*\Omega^n X
\ar[d]^{\Delta_{H_*SO(n)}\otimes H_*\Omega^n X}\\
H_*SO(n)\otimes H_*SO(n)\otimes H_*\Omega^n X
\ar[d]^{H_*S\otimes H_*\Omega^n X}\\
H_*SO(n)\otimes H_*\Omega^n X
\ar[d]^{H_*T}\\
H_*\Omega^n X.
}
$$
For example when $n=2$, let $[S^1]$ be the fundamental class of $H_*SO(2)$.
Since $[S^1]$ is a primitive element,
the $BV$-operator $BV_{diag}$ on
$H_*\Omega^2 X$, which is due to the diagonal action
in homology of $[S^1]$, is the sum
of the two operators $BV_{S}$ and $BV_{T}$ on
$H_*\Omega^2 X$ given by the action $S$ on the source and the action $T$
on the target of $[S^1]$.

Let $\circ:map_*(X,Y)\times \Omega^n X\rightarrow \Omega^n Y$, $(g,f)\mapsto
g\circ f$
be the composition map.
We have the following obvious distributive law between composition $\circ$ and loop
multiplication due the structure of co-H-group on $S^n$.
For any $f\in map_*(X,Y)$, $g,h\in\Omega^n X$,
\begin{eqnarray}
\label{distributive law}
f\circ(gh)=(f\circ g)(f\circ h).
\end{eqnarray}
In the particular case $Y=X$, this means that the multiplication of loops
$$
\Omega^n X\times\Omega^n X\rightarrow \Omega^n X
$$
is $map_*(X,X)$-equivariant.
Since $X$ is a pointed $SO(n)$-space, we have a morphism of monoid
$SO(n)\rightarrow map_*(X,X)$.
Therefore the multiplication of loops
$$
\Omega^n X\times\Omega^n X\rightarrow \Omega^n X
$$
is $SO(n)$-equivariant with respect to the action `on the target'.
So in homology, the Pontryagin product
$$
H_*\Omega^n X\times H_*\Omega^n X\rightarrow H_*\Omega^n X
$$
is a morphism of $H_*SO(n)$-modules for the action `on the target'.

For example, when $n=2$, the action  of $[S^1]$ in homology `on the target'
gives an operator $BV_T$ which is a derivation with respect to the Pontryagin
product
and so does not contribute at all to the bracket (See formula~(\ref{crochet defaut de BV derivation})).
Therefore we do not find the action `on the target' of $SO(n)$ on $\Omega^n
S^n$
interesting.
In the rest of the paper,
we will study only the action on the source of $SO(n)$ on $\Omega^n X$ or
equivalently
we will considered the diagonal action of $SO(n)$ on $\Omega^n X$ where
$X$ is a trivial pointed $SO(n)$-space (Example~\ref{trivial SO(n)-space}).

\section{Computation of the $H_*(f\mathcal{D}_n;\mathbb{Q})$-algebra $H_*(\Omega^n X;\mathbb{Q})$}
\label{compuitation:ration}
\noindent{\bf Free $e_n$-algebras.} The suspension of a graded vector space $V$ is the graded vector space $sV$
such that $(sV)_{i+1}=V_i$.
Let $L$ be a graded Lie algebra.
The $(n-1)$ desuspension of $L$, $s^{1-n}L$, has a Lie bracket of degree $n-1$.
The free graded commutative algebra $\Lambda(s^{1-n}L)$ is equipped with
a unique structure of $e_n$-algebra such that the inclusion
$s^{1-n}L\hookrightarrow\Lambda(s^{1-n}L)$ commutes with the brackets.
This inclusion is universal.
When $n=1$, this bracket is the well-known Schouten bracket of the Poisson
algebra $\Lambda L$.
Let $A$ be an $e_n$-algebra. Then any Lie algebra morphism $s^{1-n}L \rightarrow A$
extends to an unique morphism of $e_n$-algebras
$\Lambda(s^{1-n}L)\rightarrow A$.
Let $L:=\pi_*(\Omega X)\otimes\mathbb{Q}$ equipped with the Samelson bracket
and $A:=H_*(\Omega^n X;\mathbb{Q})$.
Then $s^{1-n}L=\pi_*\Omega^n X\otimes\mathbb{Q}$ and we have:

\smallskip\begin{theor}\cite[Remark 1.2
  p. 214]{Cohen-Lada-May:homiterloopspaces}\cite{CohenF:configshLiea}\label{Calcul algebre de Gerstenhaber rationnellement}
Let $X$ be an $n$-connected topological space.
The Hurewicz morphism induces an isomorphism
$$
\Lambda(\pi_*\Omega^n X\otimes\mathbb{Q})\buildrel{\cong}\over\rightarrow H_*(\Omega^n X;\mathbb{Q})
$$
of both $e_n$-algebras
and Hopf algebras between the free $e_n$-algebra and the rational
homology of the iterated loop space.
\end{theor}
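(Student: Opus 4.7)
The plan is to establish the isomorphism by combining the rational Hopf algebra structure theorem with the universal property of the free $e_n$-algebra. Since $X$ is $n$-connected with $n\geq 2$, the space $\Omega^n X$ is connected and, as (at least) a double loop space, its Pontryagin product is graded commutative; together with the diagonal, $H_*(\Omega^n X;\mathbb{Q})$ is a connected, graded commutative and graded cocommutative Hopf algebra of finite type.

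First I would invoke the Milnor-Moore/Hopf-Leray-Borel structure theorem over a field of characteristic zero to obtain a Hopf algebra isomorphism $H_*(\Omega^n X;\mathbb{Q})\cong\Lambda(P)$, where $P$ denotes the graded subspace of primitives. By the classical Cartan-Serre theorem, the rational Hurewicz homomorphism $\pi_*\Omega^n X\otimes\mathbb{Q}\to H_*(\Omega^n X;\mathbb{Q})$ is injective with image exactly $P$. Composing these two facts yields the desired isomorphism of Hopf algebras
\[
\Lambda(\pi_*\Omega^n X\otimes\mathbb{Q})\xrightarrow{\cong}H_*(\Omega^n X;\mathbb{Q}).
\]

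To promote this to an isomorphism of $e_n$-algebras, I would appeal to the universal property of the free $e_n$-algebra $\Lambda(s^{1-n}L)$ recalled just before the theorem statement. It is enough to verify that the Hurewicz inclusion $s^{1-n}L=\pi_*\Omega^n X\otimes\mathbb{Q}\hookrightarrow H_*(\Omega^n X;\mathbb{Q})$ is a morphism of graded Lie algebras of degree $n-1$, where the left-hand bracket comes from the Samelson bracket on $L=\pi_*\Omega X\otimes\mathbb{Q}$ via the $(n-1)$-fold adjunction, and the right-hand bracket is the Browder operation induced by the $\mathcal{D}_n$-action. This is precisely the comparison between the Whitehead-Samelson bracket and the operadic bracket on spherical classes proved by F.~Cohen in the cited references. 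Once this Lie-algebra compatibility is granted, the universal property provides a unique $e_n$-algebra morphism extending the Hurewicz map, and this morphism necessarily coincides with the Hopf algebra isomorphism already built, since both are the unique commutative-algebra extensions of the Hurewicz map.

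The main obstacle is the bracket comparison itself. Although it is classical, carrying it out requires unraveling the geometric definition of the Browder operation via a suitable map $S^{n-1}\to\mathcal{D}_n(2)$, evaluating it on two spherical classes $S^{p-n}\to\Omega^n X$ and $S^{q-n}\to\Omega^n X$, and identifying the adjoint of the result with the Whitehead product in $\pi_*X$. This is where one must cite Cohen-Lada-May rather than redo the computation; given that identification, all that remains is a formal application of the two universal properties (free commutative Hopf algebra and free $e_n$-algebra), both of which are forced to agree because they are extensions of the same Hurewicz map on a set of generators.
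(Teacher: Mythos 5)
The paper gives no proof of this theorem at all---it is quoted directly from Cohen--Lada--May and F.~Cohen, which is why the citations appear inside the statement itself---so there is no internal argument to compare yours against. Your outline (Milnor--Moore plus Cartan--Serre for the Hopf algebra isomorphism, the identification of the Browder operation with the Whitehead/Samelson bracket on spherical classes cited to Cohen--Lada--May, and then the agreement of the two universal extensions of the Hurewicz map) is the standard and correct reconstruction of what those references establish; the only cosmetic point is that the ``finite type'' hypothesis you insert is neither guaranteed by $n$-connectivity nor needed, since Milnor--Moore and Cartan--Serre over $\mathbb{Q}$ hold for connected graded Hopf algebras without it.
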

\noindent{\bf Free $BV_n$-algebras.}
Suppose now that $n\geq 0$ and is even and that the graded Lie
algebra $L$ is equipped with a differential $d_L$.
Let $d_0$ be the derivation of degree $n-1$ on $\Lambda(s^{1-n}L)$
given by
{\small
$$
d_0(s^{1-n}x_1\wedge\dots\wedge s^{1-n}x_k)=-\sum_{i=1}^k
(-1)^{n_i} s^{1-n}x_1\wedge\dots\wedge s^{1-n}d_L x_i\wedge\dots\wedge s^{1-n}x_k 
$$
}
where $ n_i=\sum_{j<i}\vert s^{1-n} x_j\vert$. Since $d_L^2=0$,
$d_0^2=0$.
Let $d_1$ be the endomorphism of degree $n-1$ on $\Lambda(s^{1-n}L)$
given by
{\small
\begin{multline*}
d_1(s^{1-n}x_1\wedge\dots\wedge s^{1-n}x_k)=\\
\sum_{1\leq i<j\leq k}
(-1)^{\vert x_i-n+1\vert}(-1)^{n_{ij}}
s^{1-n}\{x_i,x_j\}\wedge s^{1-n}x_1\cdots \widehat{s^{1-n} x_i}\cdots
\widehat{s^{1-n}x_j}\cdots \wedge s^{1-n}x_k~~.
\end{multline*}
}
The symbol $\hat{\quad}$ means `deleted'.
Here the sign $(-1)^{n_{ij}}$ is such that
$s^{1-n}x_1\wedge\dots\wedge s^{1-n}x_k=
(-1)^{n_{ij}}
s^{1-n}x_i\wedge s^{1-n}x_j\wedge s^{1-n}x_1\wedge\dots \widehat{s^{1-n} x_i}\dots
\widehat{s^{1-n}x_j}\dots \wedge s^{1-n}x_k$.
Since $d_L$ is a derivation with respect to the bracket, we have
$d_0d_1+d_1d_0=0$.
The Jacobi identity implies that $d_1^2=0$.

Consider the endomorphism $BV$ of degree $n-1$ on $\Lambda(s^{1-n}L)$
defined by $BV:=d_0+d_1$. We have $BV\circ BV=0$.
A direct calculation shows that
$$
(-1)^{\vert a\vert}\left(BV(ab)-(BVa)b-(-1)^{\vert a\vert}
a(BVb)\right)
$$ is the bracket on the $e_n$-algebra $\Lambda(s^{1-n}L)$.
Therefore the $e_n$-algebra $\Lambda(s^{1-n}L)$ equipped with this
linear operator $BV$ is a $BV_n$-algebra, that we will denote
$\Lambda s^{1-n}(L,d_L)$ and
that we will call the {\it free $BV_n$-algebra
on the differential graded Lie algebra $(L,d_L)$}.
The inclusion
$s^{1-n}(L,d_L)\hookrightarrow\Lambda s^{1-n}(L,d_L)$ commutes with the
brackets
and the differentials $-d_L$ and $BV$.
Again, this inclusion is universal.
Let $A$ be an $BV_n$-algebra.
Then any differential graded Lie algebra morphism $s^{1-n}(L,d_L)\rightarrow A$
extends to a unique morphism of $BV_n$-algebras
$\Lambda s^{1-n}(L,d_L)\rightarrow A$.

Suppose moreover that $\mathbb{K}:=\mathbb{Q}$.
Then the differential $d_1$ is the unique coderivation on
$\Lambda(s^{1-n}L)$
decreasing wordlength by $1$
such that
\begin{equation}\label{differentielle de Cartan-Chevalley-Eilenberg}
d_1(s^{1-n}x\wedge s^{1-n}y)=(-1)^{\vert x \vert -n+1} s^{1-n}\{x,y\}.
\end{equation}
When $n=0$, our operator $BV$ on $\Lambda(s^{1-n}L)$
coincide with the differential of the Cartan-Chevalley-Eilenberg complex~\cite[p. 301]{Felix-Halperin-Thomas:ratht}
whose homology is $\mbox{Tor}_*^{U(L,d_L)}(\mathbb{Q},\mathbb{Q})$.
The case $n=2$ is considered in~\cite[Section 1.1 p. 10]{Tamarkin-Tsygan:ncdchBValg}.
We prove:

\smallskip\begin{theor}\label{Calcul BV_n algebre rationnellement}
Suppose that $n\geq 2$ and is even. Let $X$ be a $n$-connected topological space.
The $BV_n$-algebra $H_*(\Omega^n X;\mathbb{Q})$ is isomorphic to
$\Lambda s^{1-n}(\pi_*\Omega X\otimes\mathbb{Q},0)$
the free $BV_n$-algebra on the graded Lie algebra
$\pi_*\Omega X\otimes Q$ equipped with the zero differential.
\end{theor}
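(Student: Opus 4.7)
The plan is to apply the universal property of the free $BV_n$-algebra to extend the Hurewicz map. Under the iterated adjunction identification
$$
s^{1-n}(\pi_*\Omega X\otimes\mathbb{Q})\;\cong\;\pi_*\Omega^n X\otimes\mathbb{Q},
$$
the rational Hurewicz morphism $h$ is already a morphism of graded Lie algebras by Theorem~\ref{Calcul algebre de Gerstenhaber rationnellement}, and Theorem~\ref{structure rationnelle des lacets iteres} applied to $X$ as a trivial $SO(n)$-space (Example~\ref{trivial SO(n)-space}), combined with Theorem~\ref{Structure de fdn-algebres rationnellement}, equips $H_*(\Omega^n X;\mathbb{Q})$ with a $BV_n$-structure whose $BV$-operator is induced by the degree $n-1$ generator $a_{n-1}\in H_{n-1}(SO(n);\mathbb{Q})$ acting on the source. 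To promote $h$ to a morphism of differential graded Lie algebras with zero differential on the source, the only thing left to verify is $BV\circ h=0$, i.e.\ that $BV$ annihilates spherical classes. Granting this, the universal property produces a unique morphism of $BV_n$-algebras
$$
\Phi:\Lambda s^{1-n}(\pi_*\Omega X\otimes\mathbb{Q},0)\;\longrightarrow\;H_*(\Omega^n X;\mathbb{Q}),
$$
and $\Phi$ is an isomorphism because its underlying $e_n$-algebra map coincides with the Hurewicz isomorphism of Theorem~\ref{Calcul algebre de Gerstenhaber rationnellement}.

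The substantial step, and the main obstacle, is the vanishing of $BV$ on spherical classes. Rationally $a_{n-1}$ is spherical, represented by some $\phi:S^{n-1}\to SO(n)$; its image under $\Theta$ corresponds, via the pointed equivalence $\Omega_1^n S^n\simeq\Omega_0^n S^n$ reviewed in the proof of Theorem~\ref{structure rationnelle des lacets iteres}, to the $J$-homomorphism class $J(\phi)\in\pi_{2n-1}(S^n)$. For $\alpha\in\pi_k\Omega^n X$ with adjoint $\tilde\alpha:S^{k+n}\to X$, a tracing of the source action $SO(n)\times\Omega^n X\to\Omega^n X$ through adjunctions identifies $BV(h(\alpha))$ with the spherical class represented by
$$
\tilde\alpha\circ\bigl(\mathrm{id}_{S^k}\wedge J(\phi)\bigr)\;=\;\tilde\alpha\circ\Sigma^k J(\phi)\;:\;S^{k+2n-1}\longrightarrow X.
$$

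I would then invoke Serre's rational computation of $\pi_*(S^{n+k})\otimes\mathbb{Q}$: it is concentrated in degrees $n+k$ and, when $n+k$ is even, $2(n+k)-1$, and a brief check shows $\pi_{k+2n-1}(S^{n+k})\otimes\mathbb{Q}=0$ for every $k\geq 1$ (since $n\geq 2$). Equivalently, $J(\phi)$ is rationally a multiple of the Whitehead square $[\iota_n,\iota_n]$, and Whitehead products suspend to zero. Hence $\Sigma^k J(\phi)=0$ rationally, the displayed composition vanishes, and the rational injectivity of the Hurewicz map on $\pi_*\Omega^n X$ (part of Theorem~\ref{Calcul algebre de Gerstenhaber rationnellement}) yields $BV(h(\alpha))=0$. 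The case $k=0$ is trivial: $X$ being $n$-connected forces $\pi_0\Omega^n X=\pi_n X=0$, so the only spherical class in degree zero is $1$, and $BV(1)=0$ for degree reasons. This establishes $BV\circ h=0$ and finishes the proof.
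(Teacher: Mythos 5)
Your proof is correct, and it reaches the key vanishing statement by a genuinely different route than the paper's own proof of this theorem. Both arguments agree on the frame: by Theorem~\ref{Calcul algebre de Gerstenhaber rationnellement} and the universal property of the free $BV_n$-algebra, everything reduces to showing $BV$ kills spherical classes in positive degrees. You obtain that vanishing by identifying the action of the spherical generator of $H_{n-1}(SO(n);\mathbb{Q})$ on a spherical class as precomposition of the adjoint with $\Sigma^k J(\phi)$, and then killing it because $\pi_{k+2n-1}(S^{k+n})\otimes\mathbb{Q}=0$ for $k\geq 1$ (the Whitehead square does not survive suspension). Be aware that your ``substantial step'' --- the tracing through adjunctions --- is exactly the content of the paper's Section~\ref{actionson} (Proposition~\ref{prop:prop:prop} and Theorem~\ref{relation J-homomorphism et action de SO(n)}), which requires the distributive law diagram, the reduction to primitives, and the $J$ versus $J'$ bookkeeping; it is true but is a theorem in its own right, not a formality. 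The paper's proof of the present theorem avoids that identification entirely: since $BV$ is induced by a primitive class it is a coaugmented coderivation, hence preserves primitives, which by Cartan--Serre are the spherical classes; since the $SO(n)$-action on $X$ is trivial, $BV$ on primitives is natural in $X$ and is therefore a rational homotopy operation $[S^{i+n}_\mathbb{Q},X_\mathbb{Q}]\to[S^{i+2n-1}_\mathbb{Q},X_\mathbb{Q}]$, which by Whitehead's classification is composition with an element of $\pi_{i+2n-1}S^{i+n}\otimes\mathbb{Q}=0$. Both proofs thus hinge on the same vanishing of a rational homotopy group of a sphere, but the paper gets there softly (naturality plus the classification of homotopy operations, with no need to know which operation occurs), whereas your route does more work up front and is rewarded with the precise formula $BV(\hur\,ad_n g)=\pm\,\hur\,ad_n(g\circ\Sigma^j\J f)$ --- which is exactly what survives in characteristic $2$ and drives the $\Omega^2S^3$ computation of Section~\ref{sec:bvspheres}.
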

\begin{proof}
By Theorem~\ref{Calcul algebre de Gerstenhaber rationnellement}, the
underlying $e_n$-algebras are isomorphic.
By the universal property of the free $BV_n$-algebra
$\Lambda s^{1-n}(L,0)$, its suffices to show that
the BV-operator on $H_*(\Omega^n X;\mathbb{Q})$ vanishes on spherical
classes.

The $BV$-operator on $H_*(\Omega^n X;\mathbb{Q})$ is induced by the action of a primitive element of degree $n-1$
in $H_*(SO(n);\mathbb{Q})$.
More generally, let $Y$ be a $SO(n)$-space.
The operator induced by the action of a primitive element in $H_*SO(n)$ is a coderivation (coaugmented if the action
is pointed). The image of a primitive element by a coaugmented coderivation is a primitive element.
Therefore, the operator $BV$ induces an operator of degree $n-1$ on the primitive
elements of $H_*(\Omega^n X;\mathbb{Q})$, denoted by $PH_*(\Omega^n X;\mathbb{Q})$.
Let $i\geq 1$. By Cartan-Serre's theorem,
$PH_i(\Omega^n X;\mathbb{Q})\cong \pi_i\Omega^n X\otimes\mathbb{Q}\cong\pi_{i+n}X\otimes\mathbb{Q}$.
Denote by $X_\mathbb{Q}$ the rationalization of $X$.
By the universal property of localization, $\pi_{i+n}X\otimes\mathbb{Q}\cong \pi_{i+n}X_\mathbb{Q}
\cong [S^{i+n}_\mathbb{Q},X_\mathbb{Q}]$.
So the operator $BV$ can be identified with a morphism between the pointed homotopy classes
$$ [S^{i+n}_\mathbb{Q},X_\mathbb{Q}]\rightarrow [S^{i+2n-1}_\mathbb{Q},X_\mathbb{Q}].$$

Now since $X$ was equipped with the structure of trivial $SO(n)$-pointed space,
the $BV$ operator is natural with respect to continuous maps.
This $BV$ operator can be therefore identified with a `rational homotopy operation':
a natural transformation $ [S^{i+n}_\mathbb{Q},X_\mathbb{Q}]\rightarrow [S^{i+2n-1}_\mathbb{Q},X_\mathbb{Q}]$.
By~\cite[XI.1.2]{Whitehead:eltsoht}, this rational homotopy operation is
the composition by an element of $\pi_{i+2n-1}S^{i+n}\otimes\mathbb{Q}$.
Since for $i\geq 1$, $\pi_{i+2n-1}S^{i+n}\otimes\mathbb{Q}$ is trivial, the $BV$ operator
is null on $\pi_i\Omega^n X\otimes\mathbb{Q}$ for $i\geq 1$.
\end{proof}
Remark that in fact, in this proof, we have that the BV-operator on
$H_*(\Omega^n X;\mathbb{Q})$ is the unique coderivation decreasing
wordlength by $1$
satisfying~(\ref{differentielle de Cartan-Chevalley-Eilenberg}).
Therefore we have recovered without computations that
$\Lambda (s^{-(n-1)}L)$ is a $BV_n$-algebra
in the case $L:=\pi_*\Omega X\otimes\mathbb{Q}$.
This was our starting observation.

\begin{rem}
Suppose more generally that $X$ is a $SO(n)$-pointed space
not necessarly trivial.
The generator in degree $n-1$ which defines the structure of $BV_n$-algebra
(Theorem~\ref{Structure de fdn-algebres rationnellement})
on $H_*(\Omega^n X;\mathbb{Q})$ is also primitive.
Therefore Theorem~\ref{theo7} holds also for the operator $BV=BV_{diag}$
of degree $n-1$ on $H_*(\Omega^n X;\mathbb{Q})$.
In particular, $BV=BV_S+BV_T$.
Moreover, this operator $BV$ induces a differential $d_L$
on $\pi_*\Omega X\otimes\mathbb{Q}$.
By~(\ref{BV derivation pour le crochet})
and~\cite[Remark 1.2 p. 214]{Cohen-Lada-May:homiterloopspaces},
 $d_L$ is a derivation with respect
to the Samelson bracket. And this time,
the $BV_n$-algebra $H_*(\Omega^n X;\mathbb{Q})$ is isomorphic to
$\Lambda s^{1-n}(\pi_*\Omega X\otimes\mathbb{Q},d_L)$
the free $BV_n$-algebra with the (in general non-zero) differential $d_L$.
By Theorem~\ref{Calcul BV_n algebre rationnellement}, the differential $d_1$ corresponds to $BV_S$ while
$d_0$ corresponds to $BV_T$.
\end{rem}
Finally, remark that if $X=M$ is a manifold and $n=2$,
Salvatore and Wahl~\cite[Theorem 6.5]{Salvatore-Wahl:FrameddoBVa}
have also given a formula for the $BV_2$-algebra 
$H_*(\Omega^2 M;\mathbb{Q})$.

\smallskip In the following section, we show that over any field $\mathbb{K}$, the action 
of a spherical class in $H_*SO(n)$ on a spherical class in  $H_*\Omega^n X$
still corresponds to a homotopy operation. But this homotopy operation
is generally not trivial and is related to the well-known $J$-homomorphism.

\section{Action of $SO(n)$ on iterated loop spaces and $J$-homomorphism}
\label{actionson}

Let $n\geq 2$. The case of infinite loop spaces is implicitly included as the case $n= \infty$.
In this case, $SO (n)$ is meant to be the infinite orthogonal group $SO$. 
The iterated loop space $(\Omega^n S^n,*)$ is a H-group.
So the multiplication by $id$ gives a free homotopy equivalence
$$\mbox{multiplication by id}:\Omega_0^n S^n\stackrel{\simeq}{\longrightarrow}
\Omega_1^n S^n.$$
Since both $(\Omega_0^n S^n,*)$ and $(\Omega_1^n S^n,id)$ are
H-spaces, a map homotopic to the multiplication by $id$,
will give a pointed homotopy equivalence from $(\Omega_0^n S^n,*)$ to
$(\Omega_1^n S^n,id)$~\cite[III.1.11 and III.4.18]{Whitehead:eltsoht}.
The pointed homotopy inverse of this map is a map homotopic to the
multiplication by a degree $-1$ map
from $S^n$ to $S^n$.
Let $inv:SO(n)\rightarrow SO(n)$ be the map sending an orthogonal matrix to its inverse.
Recall that we have a morphism of topological monoids
$$
\Theta: (SO(n),1) \rightarrow (\Omega_1^n S^n,id)~~.
$$
Denote by
$ad_n:\pi_{i+n}(X)\buildrel{\cong}\over\rightarrow\pi_{i}(\Omega^n X)$
the adjunction map.
The classical $J$-homomorphism that we will denote $\J$, is the composite 
$$\pi _i(SO(n),1) \buildrel{\pi_i(J)}\over\rightarrow \pi_i(\Omega^n S^n,*)
\buildrel{ad_n^{-1}}\over\rightarrow \pi_{i+n}(S^n,*)$$
 where $J$ is the composite
\begin{equation}\label{decomposition du J-homomorphisme}
J:(SO(n),1) \buildrel{\Theta}\over\rightarrow (\Omega_1^n
S^n,id)\buildrel{\simeq}\over\leftarrow (\Omega_0^n S^n,*)~~.
\end{equation}
Let $J'$ be the map
$$
J':(SO(n),1)\buildrel{inv}\over\rightarrow(SO(n),1)  \buildrel{\Theta}\over\rightarrow (\Omega_1^n
S^n,id)\buildrel{\simeq}\over\leftarrow (\Omega_0^n S^n,*)
$$
That is $J'$ is the precomposition of $J$ with the inverse map of $SO(n)$. The following lemma is a classical one in algebraic topology:

\smallskip\begin{lem}
\label{j:versus:j'}
In homotopy, $\pi_*J'=-\pi_*J$ while in homology $J_*'= J_* \chi$ where $\chi$ is
the antipode of the Hopf algebra $H_*SO(n)$. In particular, on a primitive element $a\in H_* SO (n)$,
$J'_*(a)=-J_*(a)$. 
\end{lem}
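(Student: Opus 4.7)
The plan is to reduce the lemma to the standard identifications $\pi_*(inv) = -\mathrm{id}$ (on positive-degree homotopy groups) and $inv_* = \chi$ (on homology). The key observation is that by construction $J' = J \circ inv$: both $J$ and $J'$ use the same zigzag, namely $\Theta$ followed by the homotopy inverse of the pointed equivalence $(\Omega_0^n S^n,*)\simeq(\Omega_1^n S^n,id)$ recalled at the beginning of the section, with $J'$ differing only by the extra precomposition with $inv$. Once these identifications are in hand, both $\pi_*J' = \pi_*J \circ \pi_*(inv) = -\pi_*J$ and $J'_* = J_*\circ inv_* = J_*\circ\chi$ are immediate.

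For the homotopy statement I would invoke the classical fact that on a topological group $G$ (including $G = SO$ in the $n=\infty$ case) the inversion induces multiplication by $-1$ on $\pi_i(G,1)$ for $i\geq 1$. The short argument is via Eckmann--Hilton: the pointwise Pontryagin product and the concatenation product agree on $\pi_i(G,1)$ for $i\geq 1$, and for any pointed representative $\alpha:S^i\to SO(n)$ the pointwise product $\alpha\cdot(inv\circ\alpha)$ equals the constant map at $1$, so $\pi_i(inv)[\alpha]=-[\alpha]$. For the homology statement I would appeal to the fact that the Hopf algebra structure on $H_*SO(n)$ has its product induced by group multiplication, its coproduct by the diagonal, and its antipode $\chi$ induced by $inv$, by direct verification of the antipode axiom $\mu\circ(inv\times\mathrm{id})\circ\Delta = \eta\circ\epsilon$ at the level of spaces.

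For the final claim about primitives, I would apply the antipode axiom $\mu\circ(\chi\otimes\mathrm{id})\circ\Delta = \eta\circ\epsilon$ to a primitive $a$ of positive degree: since $\Delta(a)=a\otimes 1 + 1\otimes a$ and $\epsilon(a)=0$, this forces $\chi(a)+a=0$, whence $\chi(a) = -a$ and therefore $J'_*(a) = J_*(\chi(a)) = -J_*(a)$. There is no real obstacle here; the lemma is a clean assembly of classical facts. The only mild care required is verifying that the zigzag defining $J$ behaves functorially under precomposition with $inv$, so that $J'$ really is $J\circ inv$ as a well-defined homotopy class, which follows from the naturality of the pointed equivalence $\Omega_0^n S^n\simeq\Omega_1^n S^n$ with respect to self-maps of $SO(n)$.
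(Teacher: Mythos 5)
Your proof is correct. The paper states this lemma without proof, calling it classical, and your argument (reducing to $J'=J\circ inv$, then using Eckmann--Hilton to get $\pi_i(inv)=-\mathrm{id}$ and the antipode axiom to identify $inv_*$ with $\chi$ and to compute $\chi(a)=-a$ on primitives) is precisely the standard argument being alluded to.
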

Let $\circ:(\Omega^n X,*)\times (\Omega^n S^n,*)\rightarrow (\Omega^n X,*)$, $(g,f)\mapsto
g\circ f$
be the composition map.
Using the distributive law~(\ref{distributive law}),
we have for any $g\in \Omega^n X$, $f\in\Omega^n S^n$:
$$
g\circ(f.\mbox{id}_{S^n})=(g\circ f)(g\circ \mbox{id}_{S^n})=(g\circ f)g.
$$
The left action of $SO(n)$ on $\Omega^n X$ is given by the diagram commutative up to
free homotopy
$$
\xymatrix{
SO(n)\times \Omega^n X\ar[r]^{J'\times\Omega^n X}\ar[d]_{(\Theta\circ inv)\times\Omega^n X}
&\Omega^n_0 S^n\times \Omega^n X\ar[dl]_{\mbox{mult by id}}\ar[d]^{\Omega^n
S^n\times\Delta_{\Omega^n X}}\\
\Omega^n_1 S^n\times \Omega^n X\ar[d]_{\tau}
&\Omega^n_0 S^n\times \Omega^n X\times \Omega^n X\ar[d]^{\tau\times\Omega^n X}\\
\Omega^n X\times\Omega^n_1 S^n\ar[d]_{\circ}
&\Omega^n X\times \Omega^n_0 S^n\times\Omega^n X\ar[d]^{\circ\times\Omega^n X}\\
\Omega^n X^n
&\Omega^n X\times \Omega^n X\ar[l]^{mult}
}
$$
Here $\tau:\Omega^n S^n\times \Omega^n X\rightarrow \Omega^n
X\times\Omega^n S^n$
is the map that interchanges factors.
The top left triangle commute up to free homotopy, by definition of $J'$.
The bottom diagram commutes exactly.
Denote by $\circ:H_*\Omega^n X\otimes H_*\Omega^n S^n$, $(f\otimes
g)\mapsto f\circ g $,
the morphism induced in homology by composition
and by $J_*:H_*SO(n)\rightarrow H_*\Omega^n S^n$, the morphism induced
by $J$.
In homology, the action of $H_*SO(n)$ on $H_*\Omega^n
X$, denoted by $\cdot$,
is given for $f\in H_*SO(n)$ and $g\in H_*\Omega^n X$ by
\begin{eqnarray*}
f\cdot g=\sum (-1)^{\vert g'\vert\vert f\vert} \left(g' \circ
(J'_*f)\right)g''
\end{eqnarray*}
where $\Delta g=\sum g'\otimes g''$.
Note that alternatively to give a shorter proof of this equation,
we could have applied
\cite[Theorem 3.2 i) p. 363]{Cohen-Lada-May:homiterloopspaces}.
But we have preferred to give a independent simple proof.

The unit $1$ of the
algebra $H_*\Omega^n X$ is given by applying
homology
to the inclusion of the constant map $*\hookrightarrow\Omega^n S^n$.
Therefore by applying homology to the commuting diagram
$$
\xymatrix{
{*}\times\Omega^n S^n\ar[r]\ar[d]
& {*}\ar[d]\\
\Omega^n X\times\Omega^n S^n\ar[r]_\circ
&\Omega^n X
}
$$
we obtain~\cite[Proposition 3.7(i) p. 364]{Cohen-Lada-May:homiterloopspaces} that for any $f\in H_*\Omega^n S^n$,
$$1\circ f=\varepsilon (f)1.$$
Here $\varepsilon$ is the augmentation of the Hopf algebra $H_*\Omega^n
S^n$.
So, in homology, the action of $f\in H_{>0}SO(n)$ on a primitive element
$g\in H_*\Omega^n X$ is given by
\begin{eqnarray*}
f\cdot g=(-1)^{\vert f\vert\vert g\vert}g\circ (J'_*f)= (-1)^{\vert f\vert\vert g\vert +1}g\circ (J_*f) \quad .
\end{eqnarray*}
Otherwise said, the action of $H_{>0}SO(n)$ on primitive elements
of $H_*\Omega^n X$ is given in homology by the composite
{\Small
$$
(SO(n),1)\times (\Omega^n X,*)\buildrel{J'\times\Omega^n X}\over\rightarrow
(\Omega^n S^n,*)\times (\Omega^n X,*)
\buildrel{\tau}\over\rightarrow (\Omega^n X,*)\times(\Omega^n S^n,*)
\buildrel{\circ}\over\rightarrow
(\Omega^n X,*).
$$
}
Let $\bar\circ:\pi_i(\Omega^n X,*)\times \pi_j(\Omega^n
S^n,*)\rightarrow\pi_{i+j}(\Omega^n X,*)$
be the map induced on homotopy groups by the composition $\circ$.
Explicitly, the composition map
$$\circ:(\Omega^n X,*)\times (\Omega^n S^n,*)\rightarrow (\Omega^n X,*)$$
passing to the quotient, defines a map
$$(\Omega^n X,*)\wedge (\Omega^n S^n,*)\rightarrow (\Omega^n X,*).$$
The map $\bar\circ$ sends $(f,g)$ to the composite
$$f\bar\circ g:S^i\wedge S^j\buildrel{f\wedge g}\over\rightarrow
(\Omega^n X,*)\wedge (\Omega^n S^n,*)\rightarrow (\Omega^n X,*).$$

Let $X$ be an infinite loop space, the $\bar\circ$ product extends to a product
$$\bar\circ:(X,*)\times (QS^0,*)\rightarrow (X,*) \quad .$$
As before, the $\circ$ product induces a map
$$
\bar\circ: \pi_i X \times \pi_j QS^0 \longrightarrow \pi_{i+j} X \quad .
$$
Denote by $\hur $ the Hurewicz morphism.
Since the diagram
$$
\xymatrix{
S^i\times S^j\ar[r]^{f\times g}\ar[d]
& \Omega^n X\times\Omega^n S^n\ar[d]^{\circ}\\
S^i\wedge S^j\ar[r]_{f\bar\circ g}
& \Omega^n X
}
$$
\noindent is commutative, by applying homology we obtain that the
Hurewicz morphism commutes with the composition:
$$
\hur  (f)\circ \hur (g)=\hur (f\bar\circ g).
$$
This proof is similar to the proof that $\hur $ commutes with the Samelson
bracket~\cite[X.6.3]{Whitehead:eltsoht}.
So finally, we have proved:

\smallskip\begin{proposition}
\label{prop:prop:prop}
Let $f\in\pi_{i}SO(n)$, $g\in\pi_{j}\Omega^n X$, $i,j\geq 1$.
Then the action of $\hur  (f)\in H_iSO(n)$ on $\hur  (g)\in H_j\Omega^n X$,
is given by
\begin{eqnarray*}
\hur (f)\cdot \hur (g)=
-(-1)^{ij} \hur (g\bar\circ\pi_*J(f))=
-(-1)^{ij} \hur (g\bar\circ ad_n(\J f)).
\end{eqnarray*}
\end{proposition}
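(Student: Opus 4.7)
The plan is to observe that the statement is essentially a bookkeeping consequence of three facts already established in this section, combined with the standard naturality of the Hurewicz morphism. No new geometric input is needed.

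First I would record the three ingredients. (a) For any pointed space $Y$, the Hurewicz morphism $\hur:\pi_*Y\to H_*Y$ lands in the primitives of the Hopf algebra $H_*Y$; in particular, $\hur(g)$ is a primitive element of $H_*\Omega^n X$. (b) The discussion preceding the proposition establishes that, for any $F\in H_{>0}SO(n)$ and any primitive $G\in H_*\Omega^nX$, the action is given by
$$F\cdot G=(-1)^{|F||G|+1}\,G\circ J_*(F),$$
a formula that already folds in both the Koszul sign from swapping $J'_*(F)$ past $G$ in the diagram for the action, and the sign $J'_*(a)=-J_*(a)$ for primitives coming from Lemma~\ref{j:versus:j'}. (c) From the commutative square comparing $\times$ to $\wedge$, the Hurewicz morphism is compatible with the composition pairing: $\hur(u)\circ\hur(v)=\hur(u\,\bar\circ\,v)$.

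With these in place, the computation is immediate. Specialize (b) to $F=\hur(f)$ and $G=\hur(g)$, using (a) to see that $\hur(g)$ is primitive, to obtain
$$\hur(f)\cdot\hur(g)=(-1)^{ij+1}\,\hur(g)\circ J_*(\hur(f)).$$
Naturality of the Hurewicz morphism under the map $J:SO(n)\to\Omega^n_0 S^n$ gives $J_*(\hur(f))=\hur(\pi_*J(f))$. Substituting this and applying (c) yields
$$\hur(f)\cdot\hur(g)=(-1)^{ij+1}\hur\!\bigl(g\,\bar\circ\,\pi_*J(f)\bigr)=-(-1)^{ij}\hur\!\bigl(g\,\bar\circ\,\pi_*J(f)\bigr),$$
which is the first equality. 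Since the classical $J$-homomorphism is defined by $\J=ad_n^{-1}\circ\pi_*J$, one has $\pi_*J(f)=ad_n(\J f)$, giving the second equality.

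There is no serious obstacle here; the delicate point is purely the bookkeeping of signs. In particular, the leading $-(-1)^{ij}$ arises from the combination of the Koszul sign $(-1)^{ij}$ in (b) and the extra minus sign converting $J'_*$ to $J_*$ on primitives via Lemma~\ref{j:versus:j'}. Once these have been tracked through, the formula follows directly from (b), naturality, and (c).
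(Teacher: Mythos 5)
Your proposal is correct and follows essentially the same route as the paper: the paper's own ``proof'' of this proposition is precisely the assembly of the action formula on primitives $f\cdot g=(-1)^{|f||g|+1}g\circ(J_*f)$ (derived from the equivariance diagram, the vanishing $1\circ f=\varepsilon(f)1$, and Lemma~\ref{j:versus:j'}), the naturality of the Hurewicz morphism, and the compatibility $\hur(u)\circ\hur(v)=\hur(u\,\bar\circ\,v)$, exactly as in your steps (a)--(c). Your explicit remark that spherical classes are primitive is used only implicitly in the paper, but it is the correct justification for applying the primitive-element formula to $\hur(g)$.
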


\noindent Recall that $ad_n:\pi_{i+n}X\buildrel{\cong}\over\rightarrow\pi_i\Omega^n X$
denotes the adjunction map. The following lemma follows from elementary
manipulation with the loop-suspension adjunction.

\smallskip\begin{lem}
\label{descrip:prod}
We have the following commutative diagram
$$
\xymatrix{
\pi_{i+n}X\times \pi_{j+n}S^n\ar[r]\ar[d]_{ad_n\times ad_n}^{\cong}
&\pi_{i+j+n}X\ar[d]^{ad_n}_{\cong}\\
\pi_{i}\Omega^n X\times \pi_{j}\Omega^n S^n\ar[r]_{\bar\circ}
&\pi_{i+j}\Omega^n X
}
$$
\noindent where the top arrow is the map sending $(f,g)$ to
$f\circ\Sigma^i g$, the composite of $f$ and the $i^ \mathrm{th}$ suspension of $g$.
\end{lem}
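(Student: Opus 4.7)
The plan is to verify the commutativity by a direct point-set calculation using only the loop-suspension adjunction. Pick representatives $f:S^{i+n}\to X$ of a class in $\pi_{i+n}X$ and $g:S^{j+n}\to S^n$ of a class in $\pi_{j+n}S^n$, and fix once and for all the canonical identifications $S^{i+n}=S^i\wedge S^n$, $S^{j+n}=S^j\wedge S^n$, $S^{i+j+n}=S^i\wedge S^j\wedge S^n$. The adjoints $\tilde f:=ad_n(f):S^i\to\Omega^n X$ and $\tilde g:=ad_n(g):S^j\to\Omega^n S^n$ are then characterized by the relations $\tilde f(s)(u)=f(s\wedge u)$ and $\tilde g(t)(u)=g(t\wedge u)$.

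First I would compute the top-right composite evaluated on $(f,g)$. The suspension $\Sigma^i g=\mathrm{id}_{S^i}\wedge g$ sends $s\wedge t\wedge u$ to $s\wedge g(t\wedge u)$, so postcomposing with $f$ gives the map $S^{i+j+n}\to X$ defined by $s\wedge t\wedge u\mapsto \tilde f(s)\bigl(\tilde g(t)(u)\bigr)$. Its image under $ad_n$ is the map $S^{i+j}\to\Omega^n X$ sending $s\wedge t$ to the loop $u\mapsto \tilde f(s)\bigl(\tilde g(t)(u)\bigr)$, which is exactly the element $\tilde f(s)\circ\tilde g(t)\in\Omega^n X$.

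Then I would compute the left-bottom composite. By the explicit formula for $\bar\circ$ stated just before the lemma, $\tilde f\bar\circ\tilde g$ is the composite of $\tilde f\wedge\tilde g:S^i\wedge S^j\to \Omega^n X\wedge\Omega^n S^n$ with the map $\Omega^n X\wedge\Omega^n S^n\to\Omega^n X$ induced by $\circ$, and it therefore sends $s\wedge t$ to $\tilde f(s)\circ\tilde g(t)$. The two outputs coincide, which gives the commutativity of the square.

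The whole argument is essentially bookkeeping with canonical identifications of iterated smash products of spheres, and the one genuine point to verify, which is the most likely place to slip, is that the composition map $\circ:\Omega^n X\times\Omega^n S^n\to\Omega^n X$ actually descends to the smash product $\Omega^n X\wedge\Omega^n S^n$; this is automatic because any loop composed with the constant loop at a basepoint, on either side, returns the constant loop in $\Omega^n X$. So no real obstacle is expected.
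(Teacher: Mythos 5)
Your proof is correct, and it is exactly the ``elementary manipulation with the loop-suspension adjunction'' that the paper invokes without writing out: both sides of the square send representatives to the map $s\wedge t\mapsto \tilde f(s)\circ\tilde g(t)$ once the identifications $S^{i+j+n}\cong S^i\wedge S^j\wedge S^n$ are fixed compatibly with the adjunction convention. The check that $\circ$ descends to the smash product is also right, since composing with a constant pointed map on either side yields the constant loop.
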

\noindent In the stable case, we simply obtain:
\begin{lem}
\label{lem:infinite:loop}
Let $X$ be an infinite loop space, and let $\widehat X$ be the corresponding spectrum. The 
$\bar\circ$ product coincides under the isomorphism $\pi_* X \cong \pi_*^S \widehat X$ with the natural action of the stable homotopy ring $\pi_*^S$ on the stable homotopy module $\pi_*^S\widehat X$.
\end{lem}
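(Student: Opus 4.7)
The plan is to simply pass to the stable limit in Lemma~\ref{descrip:prod} and recognize the resulting formula as the standard description of the $\pi_*^S$-module structure on $\pi_*^S\widehat X$.

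First, I would use that for an infinite loop space $X$ with associated $\Omega$-spectrum $\widehat X = (X_n)_{n \ge 0}$, the structure maps $X_n \xrightarrow{\simeq} \Omega X_{n+1}$ are weak equivalences, so $X \simeq \Omega^n X_n$ for every $n$, and the adjunction gives $\pi_i X \cong \pi_{i+n} X_n$. The colimit defining $\pi_i^S \widehat X = \mathrm{colim}_n \pi_{i+n} X_n$ is thus attained already at $n=0$, and this is exactly the canonical isomorphism $\pi_* X \cong \pi_*^S \widehat X$ we need to work with. Similarly $\pi_j \Omega^n S^n \cong \pi_{j+n} S^n$ stabilizes to $\pi_j^S$.

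Next, I would apply Lemma~\ref{descrip:prod} at the level $n$. Given $f \in \pi_i X = \pi_i \Omega^n X_n$ with adjoint $\tilde f \colon S^{i+n} \to X_n$, and $g \in \pi_j \Omega^n S^n$ with adjoint $\tilde g \colon S^{j+n} \to S^n$, the lemma says that the adjoint of $f \bar\circ g$ is the composite
\[
\tilde f \circ \Sigma^i \tilde g \colon S^{i+j+n} \xrightarrow{\Sigma^i \tilde g} S^{i+n} \xrightarrow{\tilde f} X_n.
\]
I would then observe that this is precisely the formula defining the natural pairing $\pi_i^S \widehat X \otimes \pi_j^S \to \pi_{i+j}^S \widehat X$: in the composition model of stable homotopy, the action of a class represented by $\tilde g \colon S^{j+n} \to S^n$ on a class represented by $\tilde f \colon S^{i+n} \to X_n$ is, by definition, the class of $\tilde f \circ \Sigma^i \tilde g$. (Equivalently, in the smash-product picture, the identification of composition with smashing maps of spheres, together with the structure isomorphism $X_n \wedge S^n \xrightarrow{\simeq} X_{2n}$ available since $\widehat X$ is an $\Omega$-spectrum, yields the same answer.)

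Finally, one has to check that these finite-level identifications assemble compatibly as $n$ varies, i.e.\ that the formula is stable under a further suspension. This is routine: suspending $\tilde f \circ \Sigma^i \tilde g$ once yields $\Sigma(\tilde f \circ \Sigma^i \tilde g) = \Sigma \tilde f \circ \Sigma^{i+1} \tilde g$, which agrees with the formula at level $n+1$ under the structure map $\Sigma X_n \to X_{n+1}$. The main (very mild) obstacle is bookkeeping: making sure the adjunction conventions, the choice of which factor of the smash receives $\Sigma^i$, and the identification of composition of sphere maps with the stable ring structure all match the conventions used earlier in the paper. No genuinely new input beyond Lemma~\ref{descrip:prod} and the definition of an $\Omega$-spectrum is required.
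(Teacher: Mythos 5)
Your proposal is correct and follows exactly the route the paper intends: the paper states this lemma without proof, presenting it as an immediate consequence of Lemma~\ref{descrip:prod} (``In the stable case, we simply obtain''), and your argument --- passing to the stable range via the $\Omega$-spectrum structure, identifying $f\bar\circ g$ with $\tilde f\circ\Sigma^i\tilde g$, and checking stability under suspension --- is precisely the omitted verification.
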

Recall that $\J:\pi_iSO(n)\rightarrow\pi_{i+n}(S^n)$
denote the classical $J$-homomorphism.

\smallskip\begin{theor}
\label{relation J-homomorphism et action de SO(n)}
Let $f\in\pi_{i}SO(n)$, $g\in\pi_{j+n} X$, $i,j\geq 1$.
Then the action of $\hur  (f)\in H_iSO(n)$ on
$(\hur \circ ad_n) (g)\in H_j\Omega^n X$, is given by
$$
\hur (f)\cdot (\hur \circ ad_n) (g)=-(-1)^{ij} (\hur \circ ad_n)(g\circ\Sigma^j\J f).
$$
\end{theor}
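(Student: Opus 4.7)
The plan is to derive the theorem by composing two results already established in this section: Proposition~\ref{prop:prop:prop}, which expresses the action of $\hur(f)$ on a Hurewicz image in terms of the composition product $\bar\circ$, and Lemma~\ref{descrip:prod}, which identifies $\bar\circ$ on $ad_n$-adjoints with composition followed by suspension. No new geometric input should be needed; the argument is essentially a change of variables.

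First I would specialize Proposition~\ref{prop:prop:prop} to the homotopy class $ad_n(g)\in\pi_j\Omega^n X$ (legitimate since $j\geq 1$, so $ad_n$ is the usual iterated loop-suspension adjunction bijection). This gives
$$\hur(f)\cdot(\hur\circ ad_n)(g)=-(-1)^{ij}\,\hur\bigl(ad_n(g)\,\bar\circ\, ad_n(\J f)\bigr),$$
where I have used the definition of $\J$ to rewrite $ad_n(\J f)=\pi_*J(f)$ as in the statement of Proposition~\ref{prop:prop:prop}.

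Next I would invoke Lemma~\ref{descrip:prod} with the pair $(g,\J f)\in\pi_{j+n}X\times\pi_{i+n}S^n$, that is, with the roles of $i$ and $j$ in the lemma played respectively by $j$ and $i$. The lemma's commutative square then reads
$$ad_n(g)\,\bar\circ\, ad_n(\J f)=ad_n\bigl(g\circ\Sigma^{j}\J f\bigr).$$
Substituting this into the previous displayed identity and applying $\hur$ to both sides produces exactly the stated formula
$$\hur(f)\cdot(\hur\circ ad_n)(g)=-(-1)^{ij}(\hur\circ ad_n)\bigl(g\circ\Sigma^{j}\J f\bigr).$$

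The main obstacle — and really the only one — is index and sign bookkeeping. Two things deserve verification. First, that Proposition~\ref{prop:prop:prop} may be applied unchanged to $ad_n(g)$: this is fine because the Hurewicz image of any homotopy class is primitive in $H_*\Omega^n X$, which is the hypothesis under which the sign $-(-1)^{ij}$ was derived (via the antipode correction of Lemma~\ref{j:versus:j'}). Second, that the suspension exponent in Lemma~\ref{descrip:prod} really becomes $\Sigma^j$ when the first factor lies in $\pi_{j+n}X$; this is a direct reading of the lemma after the relabeling. With those two verifications the theorem follows by concatenation of the two displayed equations.
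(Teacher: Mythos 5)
Your proof is correct and is essentially identical to the paper's: the authors likewise apply Proposition~\ref{prop:prop:prop} to $f$ and $ad_n g$ and then rewrite $(ad_n g)\,\bar\circ\, ad_n(\J f)$ via Lemma~\ref{descrip:prod} with the indices relabeled, yielding $\Sigma^j \J f$. Your additional bookkeeping checks (primitivity of Hurewicz images, the suspension exponent) are accurate and consistent with the paper's argument.
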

\begin{proof}
By applying Proposition \ref{prop:prop:prop} to $f$ and $ad_ng$,
and then Lemma~\ref{descrip:prod}
{\small $$
\hur (f)\cdot \hur (ad_ng)=-(-1)^{ij} \hur ((ad_ng)\bar\circ ad_n(\J f))=
-(-1)^{ij} \hur \circ ad_n(g\circ\Sigma^j \J f)~~.
$$}
\end{proof}
\begin{cor}\label{BV nulle en caracteristique differente de 2}
Let $X$ be a topological space. Let $g\in\pi_{j+2}(X)$, $j\geq 1$.
 
i) The BV operator $H_j(\Omega^2 X)\rightarrow H_{j+1}(\Omega^2 X)$
maps $(\hur \circ ad_2) (g)$ to $-(-1)^{j} (\hur \circ ad_2)(g\circ\Sigma^j\eta)$.

ii) In particular,
over a field $\mathbb{K}$ of characteristic different from $2$.
the $BV$ operator $H_j\Omega^2 X\rightarrow H_{j+1}\Omega^2 X$ is null
on spherical classes.
\end{cor}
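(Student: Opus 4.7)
The plan is to deduce part (i) directly from Theorem~\ref{relation J-homomorphism et action de SO(n)} specialized to $n=2$ and $i=1$, and then to derive (ii) from (i) by a $2$-torsion argument in the stable homotopy of spheres.

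First I would take $f$ to be a generator of $\pi_1 SO(2)\cong\mathbb{Z}$. Its Hurewicz image $\hur(f)$ is the fundamental class $[S^1]\in H_1 SO(2)$, which is the primitive element whose action on $H_*\Omega^2 X$ defines the $BV$-operator. Substituting this $f$ into Theorem~\ref{relation J-homomorphism et action de SO(n)} would then yield
\[
BV\bigl((\hur\circ ad_2)(g)\bigr)=-(-1)^{j}(\hur\circ ad_2)\bigl(g\circ \Sigma^j \J(f)\bigr).
\]
To finish (i) I would invoke the classical identification of the unstable $J$-homomorphism $\pi_1 SO(2)\to \pi_3 S^2$: it sends a generator of $\pi_1 SO(2)$ to the Hopf map $\eta\in\pi_3 S^2$. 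This can be seen from the fact that the Hopf fibration $S^3\to S^2$ arises as the clutching construction applied to the identity map $S^1\to SO(2)$.

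For part (ii), the key observation is that $\Sigma\eta$ already has order $2$ in $\pi_4 S^3=\mathbb{Z}/2$, so $\Sigma^j\eta\in\pi_{j+3}S^{j+2}$ is $2$-torsion for every $j\geq 1$. Therefore $g\circ \Sigma^j\eta\in\pi_{j+3}X$ is itself $2$-torsion, and its image under the Hurewicz morphism with coefficients in a field $\mathbb{K}$ of characteristic different from $2$ vanishes. Combined with part (i), this gives (ii).

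I do not foresee any serious obstacle; the only point deserving care is the identification $\J(f)=\eta$, which is a classical computation rather than a step requiring new ideas. In fact, even part (ii) is insensitive to this: any element of $\pi_3 S^2$ is a multiple of $\eta$, so $\Sigma^j\J(f)$ is a multiple of the $2$-torsion class $\Sigma^j\eta$ for $j\geq 1$, and the vanishing argument for (ii) goes through regardless of the precise integer.
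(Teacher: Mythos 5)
Your proposal is correct and follows essentially the same route as the paper: specialize Theorem~\ref{relation J-homomorphism et action de SO(n)} to $n=2$ with $f$ the generator of $\pi_1 SO(2)$ (the paper writes $f=\mathrm{id}_{S^1}$), use the classical identification $\J(f)=\eta$, and conclude (ii) from the fact that $\Sigma^j\eta$ generates $\pi_{j+3}S^{j+2}\cong\mathbb{Z}/2\mathbb{Z}$ for $j\geq 1$. Your direct ``$2$-torsion dies under the Hurewicz map in characteristic $\neq 2$'' phrasing is an equivalent rewording of the paper's ``null homotopic after localizing away from $2$,'' and your remark that (ii) is insensitive to the exact identification of $\J(f)$ is a harmless bonus.
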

\begin{proof}
Take $n=2$ and let $f:=id_{S^1}$ in
Theorem~\ref{relation J-homomorphism et action de SO(n)}.
The Hopf map $\eta:S^3\twoheadrightarrow S^2$ is equal to $\J (id_{S^1})$.
Its $j^{\mathrm{th}}$ suspension, $\Sigma^j\eta$ is the generator of
$\pi_{3+j}S^{2+j}\cong \mathbb{Z}/2\mathbb{Z}$.
Therefore localized away from $2$, $\Sigma^j\eta$ is null homotopic.
\end{proof}
Recall that an element in the homology of a space is called \emph{spherical}
if it sits in the image of the Hurewicz homomorphism.

\section{Batalin-Vilkovisky structure of $H_*(\Omega^2 S^3;\mathbb{F}_2)$}
\label{sec:bvspheres} 
In this section, we see that part ii) of
Corollary~\ref{BV nulle en caracteristique differente de 2}
is not true in characteristic $2$. Heretofore, homology will mean homology with coefficients in $\mathbb{F}_2$
and for any pointed space $X$, $\hur : \pi_* X \longrightarrow H_* (X, \mathbb{F}_2)$ will denote the modulo $2$
Hurewicz homomorphism. 
Recall the following classical splitting Lemma:
\smallskip\begin{lem}\label{splitting lemma}
Let $F\buildrel{j}\over\hookrightarrow
E\buildrel{p}\over\twoheadrightarrow B$
be a fibration up to homotopy with $F$, $E$ and $B$ path-connected.
Suppose that there is a map $s:B\rightarrow E$ such that the composite
$p\circ s$ is a homotopy equivalence and that $E$ is an H-space with
multiplication $\mu$.
Then the composite $\mu\circ (j\times s):F\times
B\buildrel{\simeq}\over\rightarrow E$ is a homotopy equivalence.
In particular $j$ admit a retract up to homotopy.
\end{lem}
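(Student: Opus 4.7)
The plan is to apply Whitehead's theorem: it suffices to verify that $\varphi := \mu \circ (j \times s)$ induces an isomorphism on every homotopy group. Because $E$ is an H-space, the product on $\pi_n(E)$ induced by $\mu$ coincides with the standard addition (Eckmann-Hilton), so, using the K\"unneth-type splitting $\pi_n(F \times B) \cong \pi_n(F) \oplus \pi_n(B)$, one reads off
\begin{equation*}
\varphi_*(\alpha, \beta) \;=\; j_*(\alpha) + s_*(\beta).
\end{equation*}

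I would then turn to the long exact homotopy sequence of the homotopy fibration $F \to E \to B$. The hypothesis that $p \circ s$ is a homotopy equivalence forces $p_* \circ s_*$ to be an isomorphism; in particular $p_*$ is surjective, $s_*$ is injective, and the connecting homomorphisms vanish, producing split short exact sequences
\begin{equation*}
0 \to \pi_n(F) \xrightarrow{j_*} \pi_n(E) \xrightarrow{p_*} \pi_n(B) \to 0
\end{equation*}
for every $n \geq 1$. A one-line diagram chase then shows $\pi_n(E) = j_*(\pi_n(F)) \oplus s_*(\pi_n(B))$, which is exactly the assertion that $\varphi_*$ is an isomorphism. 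Under the customary assumption of CW-homotopy type (or after CW-approximation), Whitehead's theorem upgrades $\varphi$ to an honest homotopy equivalence.

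For the final assertion, fix a homotopy inverse $\psi : E \to F \times B$ of $\varphi$. After arranging $s$ to be pointed with $s(*)$ equal to the H-space unit of $E$ (harmless in a path-connected H-space), the composite $\varphi \circ \iota_F$, with $\iota_F : F \hookrightarrow F \times B$ the inclusion at the basepoint, is simply $j$. Hence $\psi \circ j \simeq \iota_F$, and postcomposing with the projection $\mathrm{pr}_F : F \times B \to F$ yields the desired retract $\mathrm{pr}_F \circ \psi : E \to F$ of $j$. The only delicate point in the whole argument is basepoint bookkeeping --- identifying the H-space addition on $\pi_n(E)$ with the loop addition and normalising $s$ at the basepoint --- which is classical and follows from path-connectedness together with Eckmann-Hilton.
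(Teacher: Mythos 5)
The paper offers no proof of this lemma, labelling it a ``classical splitting Lemma,'' so there is nothing to compare against; your argument is the standard one and is correct: the H-space structure makes $\varphi_*(\alpha,\beta)=j_*\alpha+s_*\beta$ on homotopy, the section forces the long exact sequence to break into split short exact sequences, and Whitehead's theorem (legitimately invoked here, since in the paper's application the spaces are loop spaces of CW complexes and hence have CW homotopy type) finishes the job, with the retract obtained by projecting a homotopy inverse. Your attention to the basepoint/homotopy-unit normalisation is exactly the kind of detail the ``classical'' citation sweeps under the rug.
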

The homology of $\Omega^2 {S}^{3}$, as a Pontryagin algebra, is
polynomial on generators $u_n$ of degree $2^n -1, n\geq 1$,
(see Cohen's work in~\cite{Cohen-Lada-May:homiterloopspaces}).
We first notice that $u_1$
is the bottom non trivial class in positive degrees, and as such, must
be in the image of the Hurewicz homomorphism, according to the
Hurewicz theorem. But $\pi_1 \Omega ^2 S^3$ is infinite cyclic
generated by
$\iota=\mathrm{ad}_2 (\mathrm{Id}_{S^3})$ the adjoint of the identity of $S^3$.

Since $u_1=\hur \iota=\hur \circ ad_2(id_{S^3})$, thanks to part i) of
Corollary~\ref{BV nulle en caracteristique differente de 2},
$$
{BV} (u_1) = \hur  (\mathrm{ad}_2(\Sigma\eta))
$$
where $\mathrm{ad}_2(\Sigma\eta) \in \pi_2 (\Omega^2 S^3)$ is the adjoint of suspension of the Hopf map $\Sigma\eta\in \pi_4 S^3$.

We want to show that the Hurewicz homomorphism for $\Omega^2 S^3$ is
non trivial in degree $2$. This can be seen as follows.
We know that $\pi _1 \Omega ^2 S^3 \cong\mathbb{Z}$ and
$\pi _2 \Omega^2 S^3 \cong\mathbb{Z}/2\mathbb{Z}$
generated by $ad_2(\Sigma \eta)$.

Let $p:S^3\rightarrow  K(\mathbb{Z},3)$ represent the generator
for the third integral cohomology group of $S^3$,
$\pi_3K(\mathbb{Z},3)\cong H^3(S^3;\mathbb{Z})$.
Let $S^3\langle 3\rangle$ be the homotopy fiber of $p$
and let $j:S^3\langle 3\rangle\rightarrow S^3$ be the fiber inclusion.

Let $\iota:S^1\rightarrow\Omega^2 S^3$ the adjoint of the identity of
$S^3$.
Since $S^3\langle 3\rangle$ is $3$-connected,
$\pi_1(\Omega^2 p)$ is an isomorphism and so maps $\iota$ to
$\pm id_{S^1}$.
Therefore by applying Lemma~\ref{splitting lemma} to the homotopy
fibration $\Omega^2 S^3\langle 3\rangle
\buildrel{\Omega^2 j}\over\rightarrow
\Omega^2 S^3\buildrel{\Omega^2 p}
\over\rightarrow \Omega^2 K(\mathbb{Z},3)\simeq S^1$,
we obtain that
$\Omega^2 j:\Omega^2 S^3\langle 3\rangle\rightarrow \Omega^2 S^3$
has a retract up to homotopy and so is injective in homology.
Since $\pi_3(S^1)=\pi_2(S^1)=0$,
 $\pi_2(\Omega^2 j):\pi_2(\Omega^2 S^3\langle 3\rangle)
\buildrel{\cong}\over\rightarrow
\pi_2(\Omega^2 S^3)\cong \mathbb{Z}/2\mathbb{Z}$
is an isomorphism.
Since $\Omega^2 S^3\langle 3\rangle$ is simply connected,
the Hurewicz homomorphism
$\hur :\pi_2(\Omega^2 S^3\langle 3\rangle)
\buildrel{\cong}\over\rightarrow
H_2(\Omega^2 S^3\langle 3\rangle)$
is an isomorphism.
Since $H_2(\Omega^2 j)$ is also an isomorphism,
the Hurewicz homomorphism
$$\hur :\pi_2(\Omega^2 S^3)={\mathbb{Z}}/{2\mathbb{Z}}\mathrm{ad}_2.\Sigma\eta
\buildrel{\cong}\over\longrightarrow
H_2(\Omega^2 S^3)={\mathbb{Z}}/{2\mathbb{Z}}.u_1^2$$
is an isomorphism. Hence
$\hur  (\mathrm{ad}_2(\Sigma\eta))=u_1^2$ and so
$BV(u_1)=u_1^2$.
So we have proved:
\smallskip\begin{theor}
\label{bvcalc:omegasdeux}
The Batalin-Vilkovisky operator
$BV:H_1(\Omega^2S^3 , \mathbb{F}_2)\buildrel{\cong}\over\rightarrow 
H_2(\Omega^2S^3 , \mathbb{F}_2)$
is non trivial.
\end{theor}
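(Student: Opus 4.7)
The plan is to pin down $BV(u_1)$ by combining the Hurewicz/$J$-homomorphism formula of Corollary~\ref{BV nulle en caracteristique differente de 2} with a splitting argument for the homotopy fibration killing $\pi_3 S^3$. Recall that by Cohen's computation $H_*(\Omega^2 S^3;\mathbb{F}_2)$ is a polynomial algebra on classes $u_n$ of degree $2^n-1$, so $H_1 = \mathbb{F}_2\cdot u_1$ and $H_2 = \mathbb{F}_2\cdot u_1^2$. The bottom class $u_1$ must be spherical: by the mod $2$ Hurewicz theorem it is the image of a generator of $\pi_1\Omega^2 S^3\cong\mathbb{Z}$, which is $\iota = \mathrm{ad}_2(\mathrm{id}_{S^3})$. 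Hence $u_1 = \hur(\iota)$.

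Next I would apply part i) of Corollary~\ref{BV nulle en caracteristique differente de 2} with $j=1$ and $g=\mathrm{id}_{S^3}$. Since $\J(\mathrm{id}_{S^1})$ is the Hopf map $\eta$, the formula gives
\begin{equation*}
BV(u_1) \;=\; \hur\bigl(\mathrm{ad}_2(\Sigma\eta)\bigr),
\end{equation*}
so the whole theorem reduces to showing that the class $\mathrm{ad}_2(\Sigma\eta) \in \pi_2(\Omega^2 S^3)\cong \pi_4(S^3)\cong \mathbb{Z}/2$ has nontrivial mod $2$ Hurewicz image. This is the main point and cannot be read off directly from Cohen's homology computation, because the Hurewicz homomorphism on $\pi_2$ of a non-simply-connected space is not a priori injective.

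To handle this, I would pull the question back to the $3$-connected cover. Let $S^3\langle 3\rangle \xrightarrow{j} S^3 \xrightarrow{p} K(\mathbb{Z},3)$ be the Postnikov stage, looped twice to get a fibration with fiber $\Omega^2 S^3\langle 3\rangle$, total space $\Omega^2 S^3$, and base $\Omega^2 K(\mathbb{Z},3)\simeq S^1$. The map $\iota\colon S^1 \to \Omega^2 S^3$ provides a section of $\Omega^2 p$ up to homotopy equivalence onto its image (since $\pi_1(\Omega^2 p)$ is an isomorphism). Lemma~\ref{splitting lemma} then forces $\Omega^2 j$ to admit a retract up to homotopy, hence to be injective on mod $2$ homology. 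On the other hand, $\Omega^2 S^3\langle 3\rangle$ is simply connected, so its $\pi_2 \to H_2$ Hurewicz map is an isomorphism, and $\pi_2(\Omega^2 j)$ is also an isomorphism (because $\pi_2 S^1 = \pi_3 S^1 = 0$).

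Chasing the two generators through the commutative square relating Hurewicz maps and $\Omega^2 j$, one concludes that $\hur\colon \pi_2(\Omega^2 S^3)\to H_2(\Omega^2 S^3;\mathbb{F}_2)$ is an isomorphism, and so $\hur(\mathrm{ad}_2(\Sigma\eta))$ must equal the nonzero class $u_1^2$. Plugging back gives $BV(u_1)=u_1^2\neq 0$, which proves the theorem and simultaneously identifies the map as an isomorphism. The main obstacle is purely the step of verifying that $\mathrm{ad}_2(\Sigma\eta)$ is not killed by $\hur$; the splitting via $\iota$ is the natural way to circumvent the failure of simple-connectivity of $\Omega^2 S^3$.
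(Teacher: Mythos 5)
Your proposal is correct and follows essentially the same route as the paper's own proof: the reduction $BV(u_1)=\hur(\mathrm{ad}_2(\Sigma\eta))$ via Corollary~\ref{BV nulle en caracteristique differente de 2}, followed by the splitting of $\Omega^2 S^3$ through the twice-looped fibration over $\Omega^2 K(\mathbb{Z},3)\simeq S^1$ (Lemma~\ref{splitting lemma}) to show the mod $2$ Hurewicz map is an isomorphism on $\pi_2$, giving $BV(u_1)=u_1^2$. No substantive differences.
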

\begin{rem}(to be compared with the last paragraph of
section 1 of \cite[p. 271]{Getzler:BVAlg})
Since $S^3$ is a double suspension, $S^3$ is $S^1$-pointed space
(Example~\ref{suspension exemple}).
Consider the diagonal action on $\Omega^2 S^3$.
The adjunction map $X\rightarrow \Omega^2\Sigma^2 X$
is $S^1$-equivariant with respect to the trivial action on $X$
and the diagonal action on $\Omega^2\Sigma^2 X$.
So the $BV$ operator due to the diagonal action
$$
BV_{diag}:H_1(\Omega^2S^3 , \mathbb{F}_2)\rightarrow 
H_2(\Omega^2S^3 , \mathbb{F}_2)$$
is trivial.
Therefore modulo $2$, this operator $BV_{diag}$ is different from the
$BV$ operator $BV_S$ considered in Theorem~\ref{bvcalc:omegasdeux}.
On the contrary, over $\mathbb{Q}$,
$BV_{diag}$ and $BV_S$ coincide on $H_*(\Omega^2\Sigma^2 X;\mathbb{Q})$
for any connected space $X$.
Indeed,   by Theorem~\ref{Calcul BV_n algebre rationnellement}
and~(\ref{BV derivation pour le crochet}),
they both vanish on
$\pi_*(\Omega^2\Sigma^2 X)\otimes\mathbb{Q}$ which is the free Lie algebra
on $H_{>0}(X)$.
\end{rem}

\bibliography{Bibliographie_Gerald_Luc}
\bibliographystyle{amsplain}
\end{document}